\newtheorem{theorem}{Theorem}[section]
\newtheorem{lemma}[theorem]{Lemma}
\newtheorem{proposition}[theorem]{Proposition}
\newtheorem{remark}{Remark}
\newtheorem{assumption}{Assumption}
\newcommand{\vertiii}[1]{{\left\vert\kern-0.25ex\left\vert\kern-0.25ex\left\vert #1 
		\right\vert\kern-0.25ex\right\vert\kern-0.25ex\right\vert}}
\title{\LARGE \bf
	Maximal Invariant Set Computation and Design for Markov Chains
}
\author{Dylan Janak and Beh\c{c}et A\c{c}\i{}kme\c{s}e
	\thanks{This work was supported by ...}
	\thanks{a1 is with the Department of A\&A,
		University of Washington, WA 98102, USA
		{\tt\small a1@gmail.com}}%
	\thanks{a2 is with the Department of A\&A,
		University of Washington, WA 98102, USA
		{\tt\small a2@gmail.com}}%
	\thanks{a3 is with Faculty of A\&A ,
		University of Washington, WA 98102, USA
		{\tt\small a3@gmail.com}}%
}
\title{Maximal Invariant Set Computation and Design for Markov Chains}
\author{Dylan Janak and Beh\c{c}et A\c{c}\i{}kme\c{s}e}
\begin{document}
	
	\maketitle
	
	\begin{abstract}
		
		We describe an algorithm for computing the maximal invariant set for a Markov chain with linear safety constraints on the distribution over states. We then propose a Markov chain synthesis method that guarantees finite determination of the maximal invariant set. Although this problem is bilinear in the general case, we are able to optimize the convergence rate to a desirable steady-state distribution over reversible Markov chains by solving a Semidefinite Program (SDP), which promotes efficient computation of the maximal invariant set. We then demonstrate this approach with a decentralized swarm guidance application subject to density upper bounds.
		
	\end{abstract}
	
	\section{Introduction}
	
	\subsection{Background}
	
	Set theory plays an important role in robust control design \cite{blanchini2008set}, and the concept of invariance is critical to ensure that safety constraints are verifiably satisfied. For a given dynamical system, a set of states is positively invariant if once the system enters that set, it will never leave. Safety requirements therefore can be proven by showing that the initial condition is in a positively invariant subset of the safe region. The problem of verifying that a particular initial condition will satisfy safety constraints for all future times has been studied for linear \cite{gilbert1991linear}\cite{kerrigan2001robust}\cite{rakovic2005} and polynomial \cite{hirata2008exact}\cite{magron2017semidefinite} systems by constructing either a positively invariant set or an explicit reachable or controllable set. It is often useful to characterize the set of all such initial conditions\textemdash also called the \textit{maximal output admissible set}\textemdash as the union of all positively invariant subsets of the safe region \cite{kolmanovsky1998theory}.
	
	We will consider discrete-time, time-invariant Markov chains with finitely many states. The particular sequence of states typically cannot be determined in advance because of the stochastic dynamics, however the probability vector over the set of states evolves as a deterministic, linear system. There has been much research to find optimal policies for Markov decision process \cite{puterman2014markov}\cite{arapostathis2003controlled}\cite{elchamie2016convex} which result in a closed-loop Markov chain. Semidefinite Programming (SDP) can aid in the design of Markov chains, specifically to constrain or optimize the mixing rate \cite{boyd1994linear}\cite{acikmese2015markov}\cite{acikmese2015TAC}\cite{boyd2009fastest}.
	

	In this paper, we combine results from polytopic invariance analysis \cite{blanchini1999survey} and Markov chain synthesis with safety constraints \cite{acikmese2015TAC}. We first specialize a set theoretic method of computing the maximal positively invariant set within a prescribed polytope to ergodic Markov chains. Under certain assumptions, the proposed algorithm requires a finite number of iterations to converge to the exact solution. We then propose an SDP-based synthesis method to compute a reversible Markov chain with sufficiently fast (or even optimal) mixing rate, subject to transition constraints, which ensures that the maximal admissible set can be exactly computed in finite time.

	\subsection{Notation}
	The notation $x[k]$ is used for a time-dependent vector $x$ evaluated at time $k \in \{0, 1, \dots\}$. When there is no explicit dependence on the time index, $x$ and $x^+$ may be used in place of $x[k]$ and $x[k+1]$, respectively. $\mathbf{1}$ is a column vector of ones, and $e_i$ is the $i^{\mathrm{th}}$ standard basis vector. $0$ represents either the scalar $0$, or a vector/matrix of all zeros. The inequality symbols $\geq$, $\leq$, $>$, and $<$ are interpreted elementwise for all entries of vectors and matrices, with $P\succ0$ ($P \succeq 0$) indicating that matrix $P$ is positive-definite (positive-semidefinite). A \textit{probability vector} $z$ satisfies $z \geq 0$ and $\mathbf{1}^T z = 1$. The set of probability vectors of a particular dimension is called the \textit{probability simplex}, and is written as $\Delta$. A \textit{Markov matrix}, $M$, is a nonnegative, square matrix such that $\mathbf{1}^T M = \mathbf{1}^T$. A \textit{polyhedron} is the intersection of finitely many half-spaces in $\mathbb{R}^n$, and is represented as $\mathcal{P}(G,g) := \{z \in \mathbb{R}^n \mid Gz \leq g\}$. The maximal positively invariant subset of $\mathcal{X}$ subject to $x^+ = f(x)$ is written as $\mathcal{O}_\infty(f,\mathcal{X})$, or $\mathcal{O}_\infty(M,\mathcal{X})$ for the linear system $x^+ = Mx$, as an abuse of notation. The binary operation $\odot$ denotes elementwise product of vectors or matrices, i.e., $(A\odot B)_{ij} = A_{ij}B_{ij}$. The \textit{spectral radius} of matrix $A$, denoted $\rho(A)$, is the maximum magnitude of its eigenvalues.
	
	\section{Problem Formulation}
	
	
	We consider a Markov chain over a finite set of states $\mathcal{S}=\{S_1,\dots,S_n\}$, and transition probabilities $M_{ij} = \mathrm{Pr}(s^+ = S_i \mid s = S_j)$. This process results in the deterministic LTI system $x^+ = Mx$ (i.e., $x[k+1]=Mx[k], \ \forall k\in \{0,1,2,\dots\}$), where $x_i[k]$ is the probability of being in state $S_i$ at time $k$, and $M$ is a Markov matrix. The transition matrix $M$ and the initial distribution $x[0]$ are treated as known, fixed quantities, so that the probability of encountering state $S_i$ at time $k$ is simply $e_i^T M^k x[0]$.
	
	Safety constraints take the form $Gx[k] \leq g$ for all $k \in \mathbb{N}$. For example, we could specify upper and lower bounds on the probability mass of each individual state, on the sum of probabilities over a subset of states, or on the difference between probability masses of adjacent states. Having $Gx[0] \leq g$ is not sufficient to ensure that these constraints are always satisfied, since a distribution may eventually violate this constraint. Therefore it is useful to characterize the set of all initial conditions which ensure safety for all subsequent times, simplifying the analysis from checking infinitely many constraints $Gx[0]\leq g, \ Gx[1]\leq g, \dots$, to checking if $x[0]$ satisfies a finite number of inequalities. 
	
	Given a dynamical system $x^+=f(x)$, a set $\mathcal{Y}$ is \textit{positively invariant} if $x \in \mathcal{Y} \Rightarrow f(x) \in \mathcal{Y}$. For any $\mathcal{X} \subset \mathbb{R}^n$, it can be shown that there exists a unique maximal positively invariant subset of $\mathcal{X}$ (sometimes simply called the \textit{maximal invariant set}) $\mathcal{O}_\infty(f, \mathcal{X})$ containing all positively invariant subsets of $\mathcal{X}$ \cite{tarski1955}. For a time-invariant Markov chain $x^+ = M x$, the problem of reducing safety constraints $Gx[k]\leq g, \ \forall k \geq 0$ to a set of conditions that depends only on $x[0]$ is equivalent to computing the maximal positively invariant subset of $\Delta\cap\mathcal{P}(G,g)$ with respect to the linear mapping $x \rightarrow Mx$. A related problem is how to design a Markov matrix on a given connected graph which ensures safety for all $x[0] \in \mathcal{X}_0$. This problem is more difficult to address because of the more complex relationship between the Markov matrix and the maximal invariant set, but a heuristic based on optimizing convergence rate may be obtained via semidefinite programming, as we will show in Section \ref{MarkovSynthesis}.

	\section{Invariant Set Computation}
	
	 For any asymptotically stable LTI system, there exists a positively invariant polyhedron which is bounded and nonempty \cite{bitsoris1988positively}\cite{Hennet1995}. It can also be shown that the maximal invariant subset of a polyhedron is also polyhedral, and can be computed exactly in finite time \cite{gilbert1991linear}\cite{farina1998invariant}. However, Markov chains are not asymptotically stable in the usual sense of the state approaching the origin. We first present the conceptual algorithm for computing the maximal invariant set for a general dynamical system, and then for the special case of a Markov chain with polyhedral safety constraints.
	 
	 \subsection{Conceptual Algorithm}
	 
	 The problem of finding the maximal invariant set for a Markov chain is a special case of finding the maximal invariant subset of $\mathcal{X} \subset \mathbb{R}^n$ for the dynamical system
	 \begin{equation} \label{eq:nonlinear}
	 x^+ = f(x).
	 \end{equation}
	 In fact, showing that $x[0]$ is in any positively invariant subset of $\mathcal{X}$ proves safety, because all subsequent states will remain in this set. It is better still to efficiently characterize all points which lie in one of these sets, i.e., to find the \textit{maximal} positively invariant subset of $\mathcal{X}$. The following lemma shows that the maximal invariant set is precisely the set of all initial conditions which remain in $\mathcal{X}$ for all time.
	 \begin{lemma}
	 	Given the system $x^+ = f(x)$ and set $\mathcal{X} \subset \mathbb{R}^n$, the safety condition $x[k] \in \mathcal{X} \ \forall k = 0,1,2,\dots$ is satisfied iff $x[0] \in \mathcal{O}_\infty(f,\mathcal{X})$, the maximal positively invariant subset of $\mathcal{X}$.
	 \end{lemma}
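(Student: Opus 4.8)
The plan is to prove both directions at once by identifying $\mathcal{O}_\infty(f,\mathcal{X})$ with the infinite-horizon admissible set
\[
\mathcal{R} := \{\, x_0 \in \mathcal{X} \mid f^k(x_0) \in \mathcal{X} \ \text{for all } k \geq 0 \,\},
\]
where $f^k$ denotes the $k$-fold composition of $f$, with $f^0$ the identity. Since the trajectory starting from $x[0]$ is exactly $x[k] = f^k(x[0])$, the safety condition ``$x[k]\in\mathcal{X}$ for all $k$'' is by definition the statement $x[0] \in \mathcal{R}$. Hence it suffices to show $\mathcal{R} = \mathcal{O}_\infty(f,\mathcal{X})$.

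First I would check that $\mathcal{R}$ is itself a positively invariant subset of $\mathcal{X}$. Containment $\mathcal{R}\subseteq\mathcal{X}$ is immediate by taking $k=0$ in the defining condition. For invariance, let $x\in\mathcal{R}$; then $f^k(f(x)) = f^{k+1}(x) \in \mathcal{X}$ for every $k\geq 0$ (since $k+1\geq 0$), and in particular $f(x)=f^1(x)\in\mathcal{X}$, so $f(x)\in\mathcal{R}$. Thus $\mathcal{R}$ is a positively invariant subset of $\mathcal{X}$, and by the maximality property of $\mathcal{O}_\infty(f,\mathcal{X})$ — namely that it contains every positively invariant subset of $\mathcal{X}$, per \cite{tarski1955} — we conclude $\mathcal{R}\subseteq\mathcal{O}_\infty(f,\mathcal{X})$.

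For the reverse inclusion I would use a one-line induction: if $x\in\mathcal{O}_\infty(f,\mathcal{X})$, then positive invariance gives $f(x)\in\mathcal{O}_\infty(f,\mathcal{X})$, and iterating, $f^k(x)\in\mathcal{O}_\infty(f,\mathcal{X})\subseteq\mathcal{X}$ for all $k\geq 0$, so $x\in\mathcal{R}$. Combining the two inclusions yields $\mathcal{R}=\mathcal{O}_\infty(f,\mathcal{X})$, and the stated equivalence follows since $x[0]\in\mathcal{R}$ is, verbatim, the safety condition.

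There is essentially no hard step here; the only point that warrants care is the existence and well-definedness of $\mathcal{O}_\infty(f,\mathcal{X})$ as \emph{the} maximal positively invariant subset of $\mathcal{X}$ — but this is precisely the fact quoted from \cite{tarski1955} immediately before the lemma (equivalently, an arbitrary union of positively invariant sets is again positively invariant, so the union of all positively invariant subsets of $\mathcal{X}$ is the desired maximal set), and may be invoked directly. Note the argument uses neither linearity nor the Markov structure, consistent with the lemma being stated for the general system \eqref{eq:nonlinear}.
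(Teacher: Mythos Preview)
Your proof is correct and matches the paper's sketch: the paper omits a detailed proof but outlines exactly this argument, observing that the forward orbit of a safe initial condition is a positively invariant subset of $\mathcal{X}$ (hence contained in $\mathcal{O}_\infty$), and conversely that membership in $\mathcal{O}_\infty$ forces the whole orbit to stay in $\mathcal{X}$. The only cosmetic difference is that you work with the full admissible set $\mathcal{R}$ at once, whereas the paper phrases it pointwise via the orbit $\{x[0],x[1],\dots\}$; the content is identical.
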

 	 A detailed proof of this lemma is omitted for space, but the idea is that $x[0] \in \mathcal{O}_\infty(f,\mathcal{X})$ implies that $\{x[0],x[1],\dots\}$ is a positively invariant subset of $\mathcal{X}$, hence $\{x[0],x[1],\dots\}$ is a subset of the maximal invariant set. Otherwise, $x[0]$ cannot be in an invariant subset of $\mathcal{X}$ because $x[k] \notin \mathcal{X}$ for some $k \geq 0$.
%
%
%

	 Since $\mathcal{O}_\infty(f,\mathcal{X})$ is the set of all initial conditions whose trajectories remain in $\mathcal{X}$ for all subsequent time steps, this set can be interpreted as the limit of the sequence $\mathcal{O}_{0}(f,\mathcal{X}), \ \mathcal{O}_{1}(f,\mathcal{X}), \ \dots$, where
	 \begin{equation}
	 \mathcal{O}_t(f,\mathcal{X}) = \{\xi \in \mathbb{R}^n \mid f^k(\xi) \in \mathcal{X}, \ k=0,\dots,t\}.
	 \end{equation}
	 Defining the preimage of set $\mathcal{A}$ for system \eqref{eq:nonlinear} as $\mathrm{Pre}(\mathcal{A})=\left\{\xi \in \mathbb{R}^n \mid f(\xi) \in \mathcal{A}\right\}$, we can express $\mathcal{O}_t$ as
	 \begin{equation}
	 \mathcal{O}_t(f,\mathcal{X}) = \mathcal{X} \cap \mathrm{Pre}(\mathcal{X}) \cap \dots \cap \mathrm{Pre}^{t}(\mathcal{X}).
	 \end{equation}
	 $\mathcal{O}_\infty(f,\mathcal{X})$ is said to be \textit{finitely determined} if there exists some $t^* \in \mathbb{N}$ such that $\mathcal{O}_{t^*}(f,\mathcal{X}) = \mathcal{O}_{\infty}(f,\mathcal{X})$. The following result and algorithm are slightly modified from those presented by Gilbert and Tan \cite{gilbert1991linear}.
	 \begin{lemma} \label{tstar}
	 	If there is some $t^*$ such that $\mathcal{O}_{t^*}(f,\mathcal{X}) \subseteq \mathcal{O}_{t^* + 1}(f,\mathcal{X})$, then $\mathcal{O}_\infty(f,\mathcal{X}) = \mathcal{O}_{t^*}(f,\mathcal{X})$.
	 \end{lemma}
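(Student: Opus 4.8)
The plan is to exploit the monotone, nested structure of the sequence $\{\mathcal{O}_t(f,\mathcal{X})\}_{t\ge 0}$ together with a one-step ``shift'' identity, so that a single containment at the index $t^*$ propagates to all larger indices and the sequence stabilizes. First I would record two elementary observations. From $\mathcal{O}_t(f,\mathcal{X}) = \mathcal{X}\cap\mathrm{Pre}(\mathcal{X})\cap\dots\cap\mathrm{Pre}^{t}(\mathcal{X})$ it is immediate that $\mathcal{O}_{t+1}(f,\mathcal{X}) = \mathcal{O}_t(f,\mathcal{X})\cap\mathrm{Pre}^{t+1}(\mathcal{X})\subseteq\mathcal{O}_t(f,\mathcal{X})$, so the sequence is (weakly) decreasing; in particular the hypothesis $\mathcal{O}_{t^*}(f,\mathcal{X})\subseteq\mathcal{O}_{t^*+1}(f,\mathcal{X})$ upgrades to the equality $\mathcal{O}_{t^*}(f,\mathcal{X})=\mathcal{O}_{t^*+1}(f,\mathcal{X})$. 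Second, by the preceding lemma, $\mathcal{O}_\infty(f,\mathcal{X})$ is exactly the set of initial conditions whose whole trajectory stays in $\mathcal{X}$, i.e.\ $\mathcal{O}_\infty(f,\mathcal{X})=\bigcap_{t=0}^{\infty}\mathcal{O}_t(f,\mathcal{X})$.

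Next I would prove the shift identity $\mathcal{O}_{t+1}(f,\mathcal{X})=\mathcal{X}\cap\mathrm{Pre}\bigl(\mathcal{O}_t(f,\mathcal{X})\bigr)$. This rests only on the facts that $\mathrm{Pre}$ distributes over intersections, $\mathrm{Pre}(\mathcal{A}\cap\mathcal{B})=\mathrm{Pre}(\mathcal{A})\cap\mathrm{Pre}(\mathcal{B})$, and that $\mathrm{Pre}^{k+1}=\mathrm{Pre}\circ\mathrm{Pre}^{k}$; together these give $\mathrm{Pre}(\mathcal{O}_t(f,\mathcal{X}))=\mathrm{Pre}(\mathcal{X})\cap\dots\cap\mathrm{Pre}^{t+1}(\mathcal{X})$, and intersecting with $\mathcal{X}$ recovers $\mathcal{O}_{t+1}(f,\mathcal{X})$. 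With this in hand I would run an induction on $k\ge 0$ to show $\mathcal{O}_{t^*+k}(f,\mathcal{X})=\mathcal{O}_{t^*}(f,\mathcal{X})$: the base case $k=0$ is trivial, and assuming it for $k$, the shift identity yields $\mathcal{O}_{t^*+k+1}=\mathcal{X}\cap\mathrm{Pre}(\mathcal{O}_{t^*+k})=\mathcal{X}\cap\mathrm{Pre}(\mathcal{O}_{t^*})=\mathcal{O}_{t^*+1}=\mathcal{O}_{t^*}$, using the equality established in the first step. Hence $\mathcal{O}_t(f,\mathcal{X})=\mathcal{O}_{t^*}(f,\mathcal{X})$ for all $t\ge t^*$, and since the sequence is decreasing, $\mathcal{O}_\infty(f,\mathcal{X})=\bigcap_{t\ge 0}\mathcal{O}_t(f,\mathcal{X})=\bigcap_{t=0}^{t^*}\mathcal{O}_t(f,\mathcal{X})=\mathcal{O}_{t^*}(f,\mathcal{X})$.

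Since this argument is short, there is no real ``hard part''; the only points requiring care are that $\mathrm{Pre}$ commutes with intersection and composes along iterates of $f$ (no surjectivity, continuity, or linearity of $f$ is needed), and that the set-valued limit characterization $\mathcal{O}_\infty=\bigcap_t\mathcal{O}_t$ is precisely what the preceding lemma supplies. As a fully self-contained alternative that bypasses the preceding lemma, I would instead deduce from $\mathcal{O}_{t^*}=\mathcal{X}\cap\mathrm{Pre}(\mathcal{O}_{t^*})$ that $f(\mathcal{O}_{t^*})\subseteq\mathcal{O}_{t^*}$, so $\mathcal{O}_{t^*}$ is a positively invariant subset of $\mathcal{X}$ and therefore $\mathcal{O}_{t^*}\subseteq\mathcal{O}_\infty$; the reverse inclusion $\mathcal{O}_\infty\subseteq\mathcal{O}_{t^*}$ holds because every point of $\mathcal{O}_\infty$ remains in $\mathcal{X}$ for the first $t^*$ steps, giving equality.
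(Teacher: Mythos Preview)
Your argument is correct. The paper itself does not supply a proof of this lemma; it simply attributes the result (and the accompanying algorithm) to Gilbert and Tan \cite{gilbert1991linear} and moves on. Your approach---upgrading the hypothesis to equality using the nestedness $\mathcal{O}_{t+1}\subseteq\mathcal{O}_t$, establishing the shift identity $\mathcal{O}_{t+1}=\mathcal{X}\cap\mathrm{Pre}(\mathcal{O}_t)$ from distributivity of $\mathrm{Pre}$ over intersections, and then inducting to freeze the sequence at $\mathcal{O}_{t^*}$---is the standard route and is entirely sound. The self-contained alternative you sketch at the end (showing $\mathcal{O}_{t^*}$ is itself positively invariant via $\mathcal{O}_{t^*}=\mathcal{X}\cap\mathrm{Pre}(\mathcal{O}_{t^*})$, hence contained in $\mathcal{O}_\infty$, with the reverse inclusion immediate) is arguably cleaner, since it appeals directly to the definition of $\mathcal{O}_\infty$ as the maximal positively invariant subset of $\mathcal{X}$ rather than going through the intersection characterization $\mathcal{O}_\infty=\bigcap_t\mathcal{O}_t$.
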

	 \begin{algorithm}
	 	\caption{Maximal invariant subset of $\mathcal{X}$ for the system $x^+ = f(x)$}
	 	\label{generalalgorithm}
	 	\begin{algorithmic}
	 		\State $t \gets 0$
	 		\State $\mathcal{O}_0 \gets \mathcal{X}$
	 		\While {$\mathcal{O}_t \nsubseteq \mathrm{Pre}^{t+1}(\mathcal{X})$}
	 		\State $\mathcal{O}_{t+1} \gets \mathcal{O}_{t}\cap\mathrm{Pre}^{t+1}(\mathcal{X})$
	 		\State $t \gets t+1$
	 		\EndWhile
	 		\State $t^* \gets t$
	 		\State $\mathcal{O}_\infty(f,\mathcal{X}) \gets \mathcal{O}_{t^*}$
	 	\end{algorithmic}
	\end{algorithm}
 	Algorithm \ref{generalalgorithm} uses the property in Lemma \ref{tstar} to compute $\mathcal{O}_\infty(f,\mathcal{X})$. If this algorithm terminates, then the resulting $\mathcal{O}_{t^*}$ is the maximal invariant set. Therefore to determine if $x$ is in $\mathcal{O}_\infty$, one only needs to check the finite set of conditions $x \in \mathcal{X}, f(x) \in \mathcal{X}, \dots, f^{t^*}(x) \in \mathcal{X}$. However, the condition $\mathcal{O}_t \subseteq \mathrm{Pre}(\mathcal{O}_t)$ may be difficult to check for arbitrary $f$ and $\mathcal{X}$. Furthermore, this algorithm may never terminate, in which case the While loop may be stopped for some finite $t$ to give an outer approximation for $\mathcal{O}_\infty$ guaranteeing constraint satisfaction only for the next $t$ steps.

	 \subsection{Algorithm for Markov Chains with Polyhedral Constraints}
	 
	 Algorithm \ref{generalalgorithm} converges for particular sets of constraints and dynamics, but there is no general guarantee that $\mathcal{O}_\infty$ is finitely determined, even for linear systems \cite{gilbert1991linear}. Fortunately, finite determination is guaranteed for ergodic Markov chains with polyhedral safety constraints. From this section onward, the set of constraints is assumed to take the form $x[k] \in \mathcal{P}(G,g)$, and $x[0]$ is required to be a probability vector. We consider the system
	 \begin{subequations} \label{eq:linear}
	 	\begin{equation}
	 		x^+ = Mx,
	 	\end{equation}
	 	\begin{equation}
		 	\mathcal{X} = \Delta \cap \mathcal{P}(G,g),
	 	\end{equation}
	 \end{subequations}
	 where $M$ is a Markov matrix.
	 
	 To implement Algorithm \ref{generalalgorithm}, we first should be able to compute $\mathrm{Pre}^{t+1}(\mathcal{X})$, to determine if $\mathcal{O}_t$ is a subset of $\mathrm{Pre}^{t+1}(\mathcal{X})$, and to represent $\mathcal{O}_{t+1}$ as the set intersection $\mathcal{O}_t \cap \mathrm{Pre}^{t+1}(\mathcal{X})$. For System \eqref{eq:linear}, it is straightforward to show that $\mathrm{Pre}(\mathcal{P}(H,h))=\mathcal{P}(HA,h)$, and therefore that
	 \begin{equation}
	 \mathcal{O}_{t} =\mathcal{P}\left(\left[\begin{array}{c}G\\
	 GM\\ \vdots \\ GM^t\end{array}\right],\left[\begin{array}{c}g\\
	 g \\ \vdots \\ g\end{array}\right]\right).
	 \end{equation}
	 Since $\mathcal{O}_{t+1} = \mathcal{O}_t \cap \mathrm{Pre}(\mathcal{O}_t)$, it follows by induction that if $\mathcal{O}_t$ is a polyhedron, then so is $\mathcal{O}_k$ for all $k \geq t+1$. This also demonstrates that if the constraints $\mathcal{X}$ are polyhedral, and if $\mathcal{O}_\infty$ is finitely determined, then $\mathcal{O}_\infty = \mathcal{O}_{t^*}$ is a polyhedron.
	 
	 The following lemma demonstrates that checking the condition $\mathcal{O}_t \subseteq \mathrm{Pre}^{t+1}(\mathcal{X})$ in Algorithm \ref{generalalgorithm} is equivalent to solving a linear feasibility problem \cite{hennet1989extension}\cite{dorea1996computation}.
	\begin{lemma}[Extended Farkas' Lemma] \label{Farkas}
		Let $\mathcal{P}_1=\{z \in \mathbb{R}^n \mid G_1 z \leq g_1\}$ and $\mathcal{P}_2=\{z \in \mathbb{R}^n \mid G_2 z \leq g_2\}$, and let $\mathcal{P}_1$ be bounded and nonempty. Then $\mathcal{P}_1 \subseteq \mathcal{P}_2$ iff there exists a nonnegative matrix $Y$ such that $Yg_1 \leq g_2$ and $YG_1 = G_2$.
	\end{lemma}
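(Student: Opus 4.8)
The plan is to prove the two implications separately. The reverse direction (sufficiency of the matrix $Y$) is a one-line computation, and the forward direction reduces to the affine form of Farkas' lemma (equivalently, strong LP duality) applied one row of $G_2$ at a time.

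For the \emph{if} direction, suppose a nonnegative $Y$ with $YG_1 = G_2$ and $Yg_1 \le g_2$ exists. For any $z \in \mathcal{P}_1$ we have $G_1 z \le g_1$, and left-multiplication by $Y \ge 0$ preserves the inequality, so $G_2 z = Y G_1 z \le Y g_1 \le g_2$, i.e.\ $z \in \mathcal{P}_2$. This step uses neither boundedness nor nonemptiness of $\mathcal{P}_1$.

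For the \emph{only if} direction, assume $\mathcal{P}_1 \subseteq \mathcal{P}_2$. Write $G_2^{(i)}$ for the $i$-th row of $G_2$ and $g_2^{(i)}$ for the corresponding entry of $g_2$; the containment is equivalent to the statement that, for every $i$, the scalar inequality $G_2^{(i)} z \le g_2^{(i)}$ holds for all $z$ with $G_1 z \le g_1$. For each $i$, consider the linear program $\mu_i := \max\{\, G_2^{(i)} z : G_1 z \le g_1 \,\}$. Since $\mathcal{P}_1$ is nonempty and bounded, this LP is feasible with a finite optimum, and the containment forces $\mu_i \le g_2^{(i)}$. By LP strong duality, the dual $\min\{\, g_1^T y : y \ge 0,\ G_1^T y = (G_2^{(i)})^T \,\}$ attains the same value at some $y_i \ge 0$, so $y_i^T G_1 = G_2^{(i)}$ and $y_i^T g_1 = \mu_i \le g_2^{(i)}$. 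Stacking the row vectors $y_i^T$ into a matrix $Y$ gives $Y \ge 0$, $Y G_1 = G_2$, and $Y g_1 \le g_2$, as desired. (If some row $G_2^{(i)}$ is zero, take $y_i = 0$; nonemptiness of $\mathcal{P}_1 \subseteq \mathcal{P}_2$ ensures $0 \le g_2^{(i)}$.)

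The only substantive ingredient is the classical affine Farkas' lemma, which I would invoke as a known result rather than reprove. The points to get right are: (i) it is the \emph{equality} constraints $G_1^T y = (G_2^{(i)})^T$ in the LP dual that deliver $Y G_1 = G_2$ exactly, rather than a one-sided relation; and (ii) finiteness of each $\mu_i$ — here guaranteed by boundedness and nonemptiness of $\mathcal{P}_1$ — is precisely the hypothesis licensing strong duality. Beyond careful bookkeeping of the row-wise assembly of $Y$, no real obstacle remains.
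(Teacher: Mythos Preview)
Your proof is correct and follows essentially the same approach as the paper: a direct computation for sufficiency, and for necessity a row-by-row application of LP strong duality to the programs $\max\{G_2^{(i)} z : G_1 z \le g_1\}$, stacking the dual optimizers into $Y$. Your justification of the finiteness hypothesis via boundedness and nonemptiness of $\mathcal{P}_1$ is in fact slightly cleaner than the paper's appeal to Slater's condition, but the argument is otherwise identical.
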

	\begin{proof}
		(Sufficiency) Consider an arbitrary $x \in \mathcal{P}_1$, so by definition, $G_1 x \leq g_1$. Both sides of this inequality can be left-multiplied by the nonnegative matrix $Y$ to obtain $YG_1 x\leq Yg_1$, and thus $G_2 x = YG_1 x \leq Yg_1 \leq g_2$. This shows that $x \in \mathcal{P}_1 \Rightarrow x \in \mathcal{P}_2$, and so $\mathcal{P}_1 \subseteq \mathcal{P}_2$.
		
		(Necessity) These conditions are necessary as a direct result of strong duality. If $\mathcal{P}_1 \subseteq \mathcal{P}_2$, then $p_i^* = \sup_{x \in \mathcal{P}_1} e_i^T G_2 x \leq \sup_{x \in \mathcal{P}_2} e_i^T G_2 x \leq e_i^T g_2$.
		Slater's condition ensures that there is no duality gap, ensuring that the dual solutions $d_i^*$ are finite, and thus that the dual LPs are feasible. Since $-\infty < d_i^* = p_i^* \leq e_i^T g_2$, there must exist a minimizer $y_i^*$ in the dual domain for which $g_1^T y_i^* \leq e_i^Tg_2$. It then follows that the matrix $Y = \left[\begin{array}{ccc}	y_1^* & \cdots & y_{m_2}^*\end{array}\right]^T$ satisfies the conditions $Yg_1 \leq g_2$, $YG_1 = G_2$, and $Y \geq 0$.
	\end{proof}
	
	Lemma \ref{Farkas} is useful to verify that a polyhedron is positively invariant for linear dynamics, since the invariance condition  $Hx\leq h \Rightarrow HAx \leq h$ is equivalent to $\mathcal{P}(H,h) \subseteq \mathcal{P}(HA,h)$. The additional structure of the Markov chain dynamics on the probability simplex can be exploited to arrive at the following results.
	
	\begin{theorem} \label{farkasmarkov}
		Let $\Delta\cap\mathcal{P}(G,g)$ be nonempty. Then $\Delta\cap\mathcal{P}(G,g) \subseteq \mathcal{P}(H,h)$ iff there exists a nonnegative matrix $Y$ such that $H-h\mathbf{1}^T \leq Y(G-g\mathbf{1}^T)$.
	\end{theorem}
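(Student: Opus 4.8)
The plan is to prove the two implications separately. Sufficiency is a short direct computation exploiting the simplex constraint, and necessity reduces to the Extended Farkas' Lemma (Lemma~\ref{Farkas}) applied to a half-space description of $\Delta\cap\mathcal{P}(G,g)$.

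For sufficiency, suppose $Y\geq 0$ satisfies $H-h\mathbf{1}^T\leq Y(G-g\mathbf{1}^T)$ and pick any $z\in\Delta\cap\mathcal{P}(G,g)$. Since $z\geq 0$, right-multiplying the elementwise matrix inequality by $z$ preserves it, giving $(H-h\mathbf{1}^T)z\leq Y(G-g\mathbf{1}^T)z$. The key point is that $\mathbf{1}^Tz=1$ turns $(G-g\mathbf{1}^T)z$ into $Gz-g\leq 0$, and $Y\geq 0$ then forces $Y(G-g\mathbf{1}^T)z\leq 0$; hence $Hz\leq h\mathbf{1}^Tz=h$, i.e.\ $z\in\mathcal{P}(H,h)$.

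For necessity, I would write $\mathcal{P}_1:=\Delta\cap\mathcal{P}(G,g)$ in the form $\mathcal{P}(G_1,g_1)$ by stacking the rows of $G$, $-I$, $\mathbf{1}^T$, and $-\mathbf{1}^T$ (with right-hand side $g$, $0$, $1$, $-1$), which encodes $Gz\leq g$, $z\geq 0$, and $\mathbf{1}^Tz=1$. This $\mathcal{P}_1$ is bounded (it lies in $\Delta$) and nonempty by hypothesis, so Lemma~\ref{Farkas} yields a nonnegative $\tilde Y$ with $\tilde Y G_1=H$ and $\tilde Y g_1\leq h$. Partitioning $\tilde Y=[\,Y\mid Y_I\mid y_+\mid y_-\,]$ conformally with the four blocks, these become $YG-Y_I+(y_+-y_-)\mathbf{1}^T=H$ and $Yg+(y_+-y_-)\leq h$, with all blocks nonnegative. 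Setting $y:=y_+-y_-$, the first identity gives $H\leq YG+y\mathbf{1}^T$ (since $Y_I\geq 0$), while the second gives $y\leq h-Yg$ and hence $y\mathbf{1}^T\leq(h-Yg)\mathbf{1}^T$ elementwise; combining, $H\leq Y(G-g\mathbf{1}^T)+h\mathbf{1}^T$, which is exactly the claimed inequality with this $Y\geq 0$.

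I expect the bookkeeping in the necessity direction to be the main obstacle: splitting the equality $\mathbf{1}^Tz=1$ into two inequalities, partitioning the Farkas multiplier matrix consistently with those blocks, and noticing that the unconstrained sign of $y_+-y_-$ is precisely what produces the $h\mathbf{1}^T$ term. It is also worth checking that only $\mathcal{P}_1$ needs to be bounded and nonempty in Lemma~\ref{Farkas}—both hold here—so no extra regularity on $\mathcal{P}(H,h)$ is required.
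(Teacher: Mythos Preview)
Your proof is correct and follows essentially the same approach as the paper's: the sufficiency direction is the identical direct computation, and for necessity you apply Lemma~\ref{Farkas} to the four-block half-space description of $\Delta\cap\mathcal{P}(G,g)$, partition the resulting multiplier, and combine the two conditions just as the paper does (only the ordering of the blocks $-I$, $\mathbf{1}^T$, $-\mathbf{1}^T$ differs, which is immaterial).
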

	\begin{proof}
		(Sufficiency) Let $x$ be any probability vector such that $Gx \leq g$. Since $x \geq 0$, the inequality $H-h\mathbf{1}^T \leq Y(G-g\mathbf{1}^T)$ can be right-multiplied by $x$ so that $(H-h\mathbf{1}^T)x \leq Y(G-g\mathbf{1}^T)x$, which simplifies to $Hx-h \leq Y(Gx-g)$. The right-hand side is the product of a nonnegative matrix $Y$ and a nonpositive vector $Gx-g$, so $Hx-h \leq Y(Gx-g) \leq 0$, which implies that $x \in \mathcal{P}(H,h)$.
		
		(Necessity) The condition $\Delta\cap\mathcal{P}(G,g) \subseteq \mathcal{P}(H,h)$ is equivalent to
		\begin{equation}
		\mathcal{P}\left(\left[\begin{array}{c}
		G \\ \mathbf{1}^T \\ -\mathbf{1}^T \\ -I
		\end{array}\right],\left[\begin{array}{c}
		g \\ 1 \\ -1 \\ 0
		\end{array}\right]\right) \subseteq \mathcal{P}(H,h).
		\end{equation}
		From Lemma \ref{Farkas}, if $\Delta\cap\mathcal{P}(G,g) \subseteq \mathcal{P}(H,h)$, then there exists a nonnegative matrix $\left[\begin{array}{cccc}Y&Y_+&Y_-&Y_0\end{array}\right]$ such that $YG + Y_+\mathbf{1}^T - Y_- \mathbf{1}^T - Y_0 = H$ and $Yg + Y_+ - Y_- \leq h$. The second inequality is equivalent to $(Y_+-Y_-)\mathbf{1}^T \leq (h-Yg)\mathbf{1}^T$. Putting these two inequalities together and noting that $Y_0 \geq 0$, we can verify that $YG+(h-Yg)\mathbf{1}^T \geq YG + Y_+\mathbf{1}^T - Y_- \mathbf{1}^T = Y_0 + H \geq H$, which implies that $Y(G-g\mathbf{1}^T) \geq H-h\mathbf{1}^T$.
	\end{proof}
	The next result gives a necessary and sufficient condition for invariance.
	\begin{theorem} \label{invariancetheorem}
		Consider the system $x^+ = Mx$ with $\mathbf{1}^T M = \mathbf{1}^T$, $M \geq 0$, and $x[0] \in \Delta\cap\mathcal{P}(G,g)$. The polyhedron $\Delta\cap\mathcal{P}(G,g)$ is positively invariant iff either $\Delta\cap\mathcal{P}(G,g) = \emptyset$, or there exists a nonnegative matrix $Y \geq 0$ such that $Y(G-g\mathbf{1}^T) \geq (G-g\mathbf{1}^T)M$.
	\end{theorem}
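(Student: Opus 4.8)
The plan is to reduce positive invariance to a single polyhedral inclusion and then invoke Theorem~\ref{farkasmarkov}. First I would dispose of the trivial case: if $\Delta\cap\mathcal{P}(G,g)=\emptyset$, then the implication $x\in\Delta\cap\mathcal{P}(G,g)\Rightarrow Mx\in\Delta\cap\mathcal{P}(G,g)$ holds vacuously, so the set is positively invariant and the stated condition is met. Hence in the remainder I would assume $\Delta\cap\mathcal{P}(G,g)\neq\emptyset$.

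Next I would observe that, because $M\geq 0$ and $\mathbf{1}^TM=\mathbf{1}^T$, every probability vector $x$ is mapped to a probability vector $Mx$: indeed $Mx\geq 0$ and $\mathbf{1}^T(Mx)=\mathbf{1}^Tx=1$, so $Mx\in\Delta$ automatically. Consequently, for $x\in\Delta\cap\mathcal{P}(G,g)$ the membership $Mx\in\Delta\cap\mathcal{P}(G,g)$ is equivalent to the single condition $GMx\leq g$, i.e. $x\in\mathcal{P}(GM,g)$. Therefore $\Delta\cap\mathcal{P}(G,g)$ is positively invariant if and only if $\Delta\cap\mathcal{P}(G,g)\subseteq\mathcal{P}(GM,g)$.

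Now I would apply Theorem~\ref{farkasmarkov} with $H=GM$ and $h=g$: since $\Delta\cap\mathcal{P}(G,g)$ is nonempty, this inclusion holds iff there exists $Y\geq 0$ with $GM-g\mathbf{1}^T\leq Y(G-g\mathbf{1}^T)$. The final step is the algebraic identity $GM-g\mathbf{1}^T=(G-g\mathbf{1}^T)M$, which follows from $\mathbf{1}^TM=\mathbf{1}^T$; substituting it turns the condition into $Y(G-g\mathbf{1}^T)\geq(G-g\mathbf{1}^T)M$, as claimed, and combining with the empty-set case completes the equivalence.

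There is no deep obstacle here; the argument is essentially bookkeeping built on the extended Farkas machinery already developed. The one point that requires care is the reduction in the second paragraph: one must verify that the simplex part of the constraint is preserved ``for free'' by every Markov matrix, so that the only genuinely binding inequality after one step is $GMx\leq g$. Without that observation one would be tempted to also carry $Mx\geq 0$ and $\mathbf{1}^TMx=1$ as additional rows of $H$, which would alter the application of Theorem~\ref{farkasmarkov} and obscure the clean form of the final condition.
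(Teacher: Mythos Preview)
Your proof is correct and follows essentially the same route as the paper: handle the empty case vacuously, use the Markov property to reduce invariance to the inclusion $\Delta\cap\mathcal{P}(G,g)\subseteq\mathcal{P}(GM,g)$, apply Theorem~\ref{farkasmarkov} with $H=GM$, $h=g$, and then rewrite $GM-g\mathbf{1}^T=(G-g\mathbf{1}^T)M$ using $\mathbf{1}^TM=\mathbf{1}^T$. Your closing remark about why the simplex rows need not be carried into $H$ is a welcome clarification but does not change the structure of the argument.
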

	\begin{proof}
		If $\Delta\cap\mathcal{P}(G,g) = \emptyset$, then no feasible trajectory exists, and the statement $x \in \emptyset \Rightarrow x^+ \in \emptyset$ is true by the nonexistence of a counterexample.
		
		If $\Delta\cap\mathcal{P}(G,g)$ is nonempty, then this set is positively invariant if $x \in \Delta\cap\mathcal{P}(G,g) \Rightarrow x^+ \in \Delta\cap\mathcal{P}(G,g)$. $M$ is a Markov matrix, so $Mx$ is a probability vector for all $x \in \Delta$. Therefore all that remains to be shown is the condition $x \in \Delta\cap\mathcal{P}(G,g) \Rightarrow Mx \in \mathcal{P}(G,g)$, or equivalently $\Delta\cap\mathcal{P}(G,g) \subseteq \mathcal{P}(GM,g)$. Theorem \ref{farkasmarkov} is then applied by setting $H=GM$ and $h=g$ to obtain the necessary and sufficient condition that there exists a nonnegative matrix $Y$ such that $Y(G-g\mathbf{1}^T) \geq GM-g\mathbf{1}^T$, or equivalently, $Y(G-g\mathbf{1}^T) \geq (G-g\mathbf{1}^T)M$.
		
	\end{proof}
	\begin{remark}
		For all $x \in \Delta$, $Gx\leq g$ is equivalent to the conical condition $(G-g\mathbf{1}^T)x \leq 0$, therefore $g$ can be set to zero without loss of generality.
	\end{remark}
	
	By Theorem \ref{farkasmarkov}, the condition $\mathcal{O}_t \subseteq \mathrm{Pre}^{t+1}(\mathcal{X})$ is equivalent to the existence of nonnegative matrices $Y_0,\dots,Y_t$ which satisfy the elementwise inequality
	\begin{equation}
	 Y_0(G-g\mathbf{1}^T)M^0+\dots+Y_t(G-g\mathbf{1}^T)M^t \geq (G-g\mathbf{1}^T)M^{t+1},
	\end{equation}
	which leads to Algorithm \ref{markovalgorithm} for computing the maximal invariant set for a Markov chain under polyhedral safety constraints.
	\begin{algorithm}
		\caption{Maximal invariant subset of $\mathcal{P}(G,g)$ for the Markov chain $x^+ = M x$}
		\label{markovalgorithm}
		\begin{algorithmic}
			\State $t \gets 0$
			\If {$\nexists x \geq 0 \ s.t. \ Gx \leq g, \mathbf{1}^Tx = 1 $}
				\State $\mathcal{O}_\infty = \emptyset$
			\Else
				\State $G_0 \gets G$
				\State $g_0 \gets g$
				\While {$\nexists Y \geq 0 \ s.t. \ Y(G_t-g_t\mathbf{1}^T) \geq GM^{t+1}-g\mathbf{1}^T$}
					\State $t \gets t+1$
					\State $G_t \gets \left[\begin{array}{c}
					G_{t-1} \\
					GM^t
					\end{array}\right]$
					\State $g_t \gets \left[\begin{array}{c}
					g_{t-1} \\
					g
					\end{array}\right]$
				\EndWhile
				\State $t^* \gets t$
				\State $\mathcal{O}_\infty \gets \mathcal{P}(G_{t^*},g_{t^*})\cap\Delta$
			\EndIf
		\end{algorithmic}
	\end{algorithm}
 	Figure \ref{fig:predemo} illustrates Algorithm \ref{markovalgorithm} for the simple 3-dimensional example with $x[k] \leq \left[\begin{array}{ccc}0.6 & 0.5 & 0.5\end{array}\right]^T$ and dynamics
	\begin{equation}
	x^+ = \left[\begin{array}{ccc}
	0.8 & 0.2 & 0 \\
	0.2 & 0.2 & 0.9 \\
	0 & 0.6 & 0.1
	\end{array}\right] x.
	\end{equation}
	
	\begin{figure}
		\centering
		\includegraphics[width=0.6\linewidth]{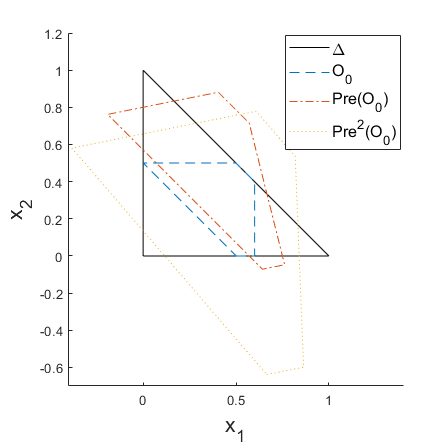}
		\caption{Algorithm \ref{generalalgorithm} terminates at $t^*=1$ because $\mathcal{O}_{t^*} = \mathcal{O}_0 \cap \mathrm{Pre}(\mathcal{O}_0)$ is contained entirely within $\mathrm{Pre}^2(\mathcal{O}_0)$, and is therefore positively invariant. All sets are shown projected onto the $x_1$-$x_2$ plane.}
		\label{fig:predemo}
	\end{figure}
	
	There is no guarantee that this algorithm will converge in a finite number of steps for a Markov chain with multiple eigenvalues of unit magnitude. A sufficient condition for finite determination of $\mathcal{O}_\infty(M,\mathcal{X})$ is that the Markov chain be \textit{ergodic} (i.e., irreducible and aperiodic), which is ensured by there existing some strictly positive probability vector $v$ such that $\lim_{k \rightarrow \infty}M^k = v \mathbf{1}^T$. This property not only ensures that $v$ is an eigenvector of $M$, but also that the error $e[k] = x[k]-v$ converges to zero with convergence rate $\rho(M-v\mathbf{1}^T)$.
	\begin{proposition} \label{proposition}
		Let $M$ be an ergodic Markov matrix. If there exists a probability vector $v \in \Delta$ such that $Mv = v$ and $Gv < g$, then Algorithm \ref{markovalgorithm} terminates after a finite number of iterations.
	\end{proposition}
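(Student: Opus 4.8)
The plan is to show that for all sufficiently large $t$ the polyhedron $\mathcal{O}_t$ is \emph{already} contained in $\mathrm{Pre}^{t+1}(\mathcal{X}) = \mathcal{P}(GM^{t+1},g)$, so that by Theorem \ref{farkasmarkov} the \texttt{while}-loop test in Algorithm \ref{markovalgorithm} succeeds and the loop exits. The mechanism is ergodicity: since $\lim_{k\to\infty}M^k = v\mathbf{1}^T$ entrywise and every feasible state lies in $\Delta$, the constraint $GM^{t+1}x\le g$ becomes redundant on $\Delta$ once $t$ is large, because $GM^{t+1}x$ is forced close to $Gv$, which is strictly interior to the constraint set by hypothesis.

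First I would quantify strict feasibility by fixing $\epsilon>0$ with $Gv \le g-\epsilon\mathbf{1}$. Next I would record a uniform convergence estimate over the simplex: for $x\in\Delta$, $M^kx - v = (M^k - v\mathbf{1}^T)x$, and since $x\ge 0$ and $\mathbf{1}^Tx=1$, component $i$ of $M^kx-v$ is a convex combination of the entries in row $i$ of $M^k - v\mathbf{1}^T$; hence $\|M^kx-v\|_\infty \le \max_{i,j}|(M^k - v\mathbf{1}^T)_{ij}| \to 0$ uniformly in $x\in\Delta$ (geometrically, at rate $\rho(M-v\mathbf{1}^T)$, though only the limit is needed). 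Combining these with a bound $\|Gy\|_\infty \le c\,\|y\|_\infty$ where $c=\max_l\sum_i|G_{li}|$, there is an $N$ with $GM^kx \le Gv+\epsilon\mathbf{1} \le g$ for every $x\in\Delta$ and every $k\ge N$ (if $G=0$ the claim is immediate).

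Then I would conclude as follows. For any $t\ge N-1$, since $\mathcal{O}_t\subseteq\Delta$, every $x\in\mathcal{O}_t$ satisfies $GM^{t+1}x\le g$, i.e. $\mathcal{O}_t\subseteq\mathcal{P}(GM^{t+1},g)$. Moreover $\mathcal{O}_t$ is nonempty and bounded for every $t$: it lies in $\Delta$, and $v\in\mathcal{O}_t$ because $Mv=v$ gives $M^kv=v$ for all $k$ while $Gv<g$. Hence Theorem \ref{farkasmarkov}, applied with $H=GM^{t+1}$, $h=g$, and the representation $\Delta\cap\mathcal{P}(G_t,g_t)=\mathcal{O}_t$, supplies a nonnegative matrix $Y$ with $Y(G_t-g_t\mathbf{1}^T)\ge GM^{t+1}-g\mathbf{1}^T$, so the \texttt{while}-loop test is satisfied no later than $t=N-1$ and the loop terminates with $t^*\le N-1$; by Lemma \ref{tstar} the returned set is $\mathcal{O}_\infty$.

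The main obstacle — really the only delicate point — is establishing the convergence $M^kx\to v$ \emph{uniformly} over all of $\Delta$ and turning it into redundancy of the added half-space constraints; once that estimate is in hand, the reduction to the Farkas-type feasibility test of Theorem \ref{farkasmarkov} is routine. A secondary point not to skip is checking that $\mathcal{O}_t$ stays nonempty and bounded at every step (via $v\in\mathcal{O}_t$), since Theorem \ref{farkasmarkov} requires this.
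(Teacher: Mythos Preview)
Your proposal is correct and follows essentially the same approach as the paper's proof: both use ergodicity to show that $GM^{k}x\le g$ holds for every $x\in\Delta$ once $k$ is large enough (via $M^{k}\to v\mathbf{1}^{T}$ together with the strict slack $Gv<g$), so that the added half-space constraints become redundant and the stopping criterion is met. Your explicit verification that $v\in\mathcal{O}_{t}$, ensuring $\mathcal{O}_{t}$ is nonempty so that Theorem~\ref{farkasmarkov} applies, is a detail the paper leaves implicit.
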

	\begin{proof}
		It can be seen by induction that $M^k = (M-v\mathbf{1}^T)^k + v\mathbf{1}^T$ for all $k \geq 1$, so the condition $GM^kx \leq g \ \forall k \in \mathbb{N}$ is equivalent to $Gx\leq g$ and $G(M-v\mathbf{1}^T)^kx \leq g-Gv$ for all $k \geq 1$. If $Gv < g$, then there exists some scalar $\varepsilon > 0$ such that $\varepsilon \mathbf{1} \leq g - Gv$. Considering the induced $\infty$-norm $\vertiii{\cdot}_\infty$,
		\begin{subequations}
			\begin{equation}
			\vertiii{ G(M-v\mathbf{1}^T)^k x}_\infty \leq \vertiii{G}_\infty \vertiii{(M-v\mathbf{1}^T)^k}_\infty \vertiii{x}_\infty
			\end{equation}
			\begin{equation}
			\leq \vertiii{G}_\infty \vertiii{(M-v\mathbf{1}^T)^k}_\infty.
			\end{equation}
		\end{subequations}
		The last inequality follows from the fact that $\vertiii{x}_{\infty} \leq 1$ for all $x \in \Delta$. $\rho(M-v\mathbf{1}^T) < 1$ since $M$ is ergodic, implying that $\lim_{k\rightarrow\infty}(M-v\mathbf{1}^T)^k = 0$, so there must exist a $K$ such that $\vertiii{(M-v\mathbf{1}^T)^k}_\infty \leq \varepsilon/\vertiii{G}_\infty$ for all $k \geq K$. For $k \geq K$, $\vertiii{ G(M-v\mathbf{1}^T)^k x}_\infty \leq \varepsilon$, implying that $G(M-v\mathbf{1}^T)^k x \leq \varepsilon \mathbf{1} \leq g - Gv$, and so $GM^kx \leq g$ for all $x \in \Delta$.
		
		The stopping criterion for Algorithm \ref{markovalgorithm} is equivalent to that of Algorithm \ref{generalalgorithm}, both terminating when $\mathcal{O}_k \subseteq \mathrm{Pre}^{k+1}(\mathcal{X})$. This is ensured when $k \geq K-1$ because $\mathcal{O}_K \subseteq \Delta \subseteq \mathrm{Pre}^{K+1}(\mathcal{X})$.
	\end{proof}
	This constant $K$ in the proof of Proposition \ref{proposition} is a conservative estimate for the number of times the condition $\nexists Y \geq 0 \ s.t \ Y(G_k-g_k\mathbf{1}^T)\geq GM^{k+1}-g\mathbf{1}^T$ must be checked in Algorithm \ref{markovalgorithm}. The effect of $\rho(M-v\mathbf{1}^T)$ and $\varepsilon$ on $K$ can be found by examining the asymptotic behavior of $\Vert(M-v\mathbf{1}^T)^k\Vert_\infty$ as $k \rightarrow \infty$. By definition, $\rho(M-v\mathbf{1}^T) = \lim_{k \rightarrow \infty} \vertiii{(M-v\mathbf{1}^T)^k}_{\infty}^{1/k}$, and so $\vertiii{(M-v\mathbf{1}^T)^k}_{\infty}$ can be roughly approximated as $\rho(M-v\mathbf{1}^T)^k$. An estimate for $K$ is obtained by solving $\rho(M-v\mathbf{1}^T)^K = \varepsilon/\vertiii{G}_\infty$ for $K$, which results in $K \approx \frac{\log(\varepsilon / \vertiii{G}_\infty)}{\log(\rho)}$. The number of iterations should be lowest when $\varepsilon/\vertiii{G}_\infty$ is large and $\rho(M-v\mathbf{1}^T)$ is small, indicating that all initial conditions should rapidly converge to a safe neighborhood about $v$. However, this particular approximation for $K$ tends to be very conservative in practice, often leading to much smaller $k^*$ than would be predicted. A more refined approach may produce a better estimate of $k^*$, and give greater insight into how well this algorithm can be expected to perform.
	
	
	
	\section{Constrained Markov Chain Synthesis} \label{MarkovSynthesis}
	
	The previous section gave a way to compute the maximal invariant subset of a polyhedron $\Delta \cap \mathcal{P}(G,g)$ under the LTI dynamics $x^+ = Mx$, which was finitely determined under the assumptions of Proposition \ref{proposition}. We now focus on designing a Markov chain that satisfies these conditions.
	A Markov chain is considered to act on a directed graph with adjacency matrix $A_a$ defined such that $[A_a]_{i,j}=1$ if there is an edge from $S_i$ to $S_j$, and $[A_a]_{i,j}=0$ otherwise. To prevent unrealizable state transitions, the linear constraint
	\begin{equation} \label{eq:adjacency}
	M\odot\left(\mathbf{1}\mathbf{1}^T - A_a^T\right) = 0
	\end{equation}
	can be enforced along with other linear relations. This equation ensures that $M_{j,i}=0$ if there is no edge from $S_i$ to $S_j$, and there is no additional constraint on $M_{j,i}$ otherwise. $M$ alternatively may be defined as a sparse matrix with a sparsity pattern that is consistent with the underlying graph.
	The conditions for positive invariance in Theorem \ref{invariancetheorem} are linear in $Y$ and $M$ for a given pair $(G,g)$. However, this set may be too restrictive to generate a feasible $M$ for which $\Delta \cap \mathcal{P}(G,g)$ is positively invariant. Clearly if $Mv=v$ for some $v$ in $\mathcal{P}(G,g)$, then there exists an $x[0]$ such that $GM^kx[0] \leq g$ for all $k \in \mathbb{N}$, namely $x[0]=v$.

	As demonstrated by de Oliveira et al. \cite{de1999new}, all eigenvalues of $M-v\mathbf{1}^T$ have magnitude less than or equal to $\lambda \in \mathbb{R}_+$ iff there exist real matrices $P \succ 0$ and $D$ satisfying the bilinear matrix inequality
	\begin{equation} \label{BMI}
	\left[\begin{array}{cc}
	\lambda^2 P & (M-v\mathbf{1}^T)^TD^T \\ D(M-v\mathbf{1}^T) & D+D^T-P
	\end{array}\right] \succeq 0.
	\end{equation}
	If $D$ is held constant, condition \eqref{BMI} becomes linear in $P$. Then the spectral radius $\lambda$ can then be minimized with a line search over the interval $[0,1]$ to eliminate bilinearity, allowing the spectral radius to be minimized over Markov chains by solving a sequence of linear matrix inequalities (LMIs).
	However in the special case that $M$ is reversible, i.e., $M \mathrm{diag}(v) = \mathrm{diag}(v) M^T$, this condition may be replaced by the LMI
	\begin{equation} \label{eq:reversibleLMI}
	-\lambda I \preceq Q^{-1}MQ - rr^T \preceq \lambda I,
	\end{equation}
	with $r = v^{1/2}$ elementwise and $Q = \mathrm{diag}(r)$ \cite{acikmese2015TAC}. Although this condition is necessary and sufficient for reversible Markov matrices, it is not necessary in the general case, and may fail to find a feasible solution if one exists.

	\subsection{Synthesis Procedure}
	
	Let the following information be given: a polytope of constraints $\mathcal{P}(G,g)$, and a graph with adjacency matrix $A_a$ which satisfies the following assumption.
	\begin{assumption} \label{assumption1}
		There is some positive integer $l$ for which there exists a path of length $l$ between any two nodes.
	\end{assumption}
	Assumption \ref{assumption1} is necessary for a primitive $M$ to exist, ensuring ergodicity. If the steady-state distribution is specified as some particular $v > 0$, then the constraint $Mv=v$ is linear.
	If $v$ is not specified, then a second assumption is made.
	\begin{assumption} \label{assumption2}
		There exists some $v\in\Delta$ such that $v>0$ and $Gv < g$.
	\end{assumption}
	If such a $v$ exists, then $D$ can be set to $\mathrm{diag}(v)^{-1}$ as before. These two assumptions ensure that there exists some Markov matrix $M$ for which $\lim_{k \rightarrow \infty}M^k = v\mathbf{1}^T$, and that Algorithm \ref{markovalgorithm} terminates as a result of Proposition \ref{proposition} from the existence of some $\varepsilon >0$ such that $g-Gv \geq \varepsilon \mathbf{1}$. Furthermore if $A_a$ is symmetric, then an ergodic, reversible Markov chain can be explicitly constructed, e.g., with the Metropolis-Hastings algorithm, which would ensure that a feasible solution exists \cite{acikmese2015markov}.

	If the steady-state distribution is not given, then the first step is to choose a $v \in \Delta$ that satisfies the strict inequalities $v > 0$ and $Gv < g$. Then find the smallest $\lambda \geq 0$ for which there exists $M \in \mathbb{R}^{n \times n}$ subject to:
	\begin{subequations} \label{convexset}
		\begin{equation}
		M \geq 0,
		\end{equation}
		\begin{equation}
		\mathbf{1}^T M = \mathbf{1}^T,
		\end{equation}
		\begin{equation}
		Mv = v,
		\end{equation}
		\begin{equation}
		M \odot (\mathbf{1}\mathbf{1}^T - A_a^T)  = 0,
		\end{equation}
		\begin{equation} \label{eq:ReversibleCondition}
		-\lambda I \preceq Q^{-1}MQ - rr^T \preceq \lambda I.
		\end{equation}
	\end{subequations}
	If no reversible ergodic Markov matrix is found, then this problem can be modified by replacing \eqref{eq:ReversibleCondition} with LMI \eqref{BMI} and $P \geq 0$. Bilinearity can be avoided by fixing $D$, e.g., $D = \mathrm{diag}(v)^{-1}$ as suggested in \cite{acikmese2015markov}, and testing various values of $\lambda$ in the interval $[0,1]$ to find the minimum.
	
	This procedure requires solving a sequence of linear feasibility problems in the matrix variable $M$. Convex programming solvers can minimize a convex function of $M$ subject to constraints \eqref{convexset}, and additional convex constraints can be imposed such as upper or lower bounds on each element of $M$ to directly specify a range of desirable transition probabilities. One possibility is to minimize the frequency of state transitions, $\sum_{i=1}^n (1-M_{i,i})v_i$, to discourage the system from changing states too often. This quantity is the probability of a transition occurring while the system has converged to $v$.
	
	Directly solving for an optimal $v$ may not scale well with dimension, as it appears bilinearly in the constraints. There may be a tradeoff in performance between not letting any element of $v$ from being too close to $0$, and maximizing the smallest entry of $g - Gv$. The maximal invariant set computation may not terminate if $M$ has multiple eigenvalues of magnitude $1$. We minimize $\rho(M - v\mathbf{1}^T)$ as a tractable heuristic to compute $\mathcal{O}_\infty$ in as few steps as possible, however further research may better estimate the number of steps the algorithm will terminate given particular $M$ and $\mathcal{P}(G,g)$.

	\section{Example}
	
	This example is a decentralized swarm guidance problem, where there are many agents which each have the same stochastic control scheme based on a state-dependent probability distribution. Each agent must choose which control action to perform based only on knowledge of its current location, and limited collision avoidance capabilities. The sequence of bins each agent visits follows a Markov chain, and if all agents follow the same randomized policy, $x[k]$ can be interpreted as the expected distribution of agents at step $k$. The Markov chain must be designed to converge safely to a desired steady-state distribution.
	
	The bins are arranged in a grid, with some bins containing obstacles which must be avoided. No cell may have more than 30\% probability mass at any time. The steady-state distribution is such that each agent is expected to spent 90\% of its time in one of the four terminal states, regardless of their initial location. The steady-state distribution $v$ is $22.5\%$ at each of the four terminal bins and $0.25\%$ elsewhere.
	The convergence rate was optimized by checking the feasibility of the set defined in \eqref{convexset} using SeDuMi, with $\lambda^* = 0.9950$ being optimal. With this value of $\lambda$, the particular $M$ was found by maximizing the linear objective function $\sum_{i}M_{i,i}v_{i}$, the rate at which agents remain in their same bin. Algorithm \ref{markovalgorithm} was run to determine set of safe initial conditions, terminating with $t^* = 3$.
	We verify that the distribution in Figure \ref{fig:safetyverification} is in $\mathcal{O}_\infty$ by checking that $x[k]\leq (0.3)\mathbf{1}, \ \forall k\in\{0,1,2,3\}$. This distribution is propagated forward to $k=100$ and $k=1000$, as shown in Figure \ref{fig:t100t1000}, which confirm that the safety constraints are always satisfied.
	

	\begin{figure}
		\centering
		\includegraphics[width=0.8\linewidth]{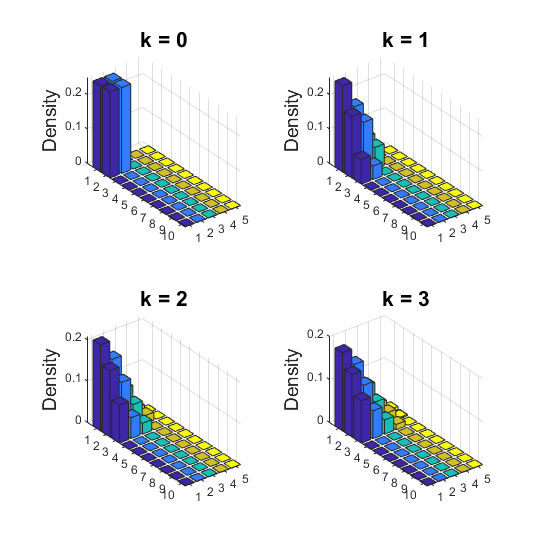}
		\caption{The initial distribution $x[0]$ is shown on the top left. Safety is verified for all subsequent time steps by observing that the safety constraints are satisfied for all $k=0,\dots,t^*$.}
		\label{fig:safetyverification}
	\end{figure}

	\begin{figure}
		\centering
		\subfloat[k = 100]{{\includegraphics[width=0.49\linewidth]{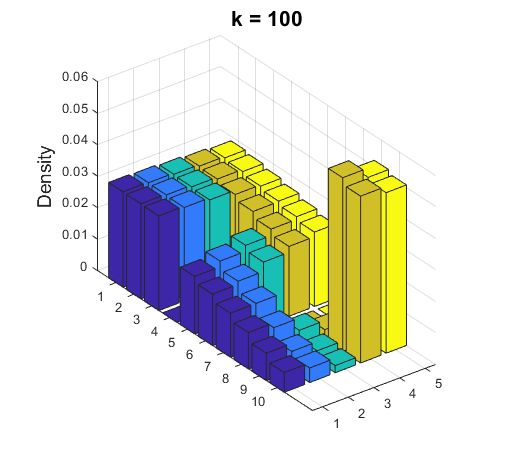} }}%
		\qquad
		\subfloat[k = 1000]{{\includegraphics[width=0.4\linewidth]{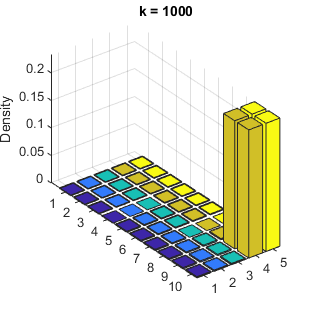} }}%
		\caption{Transient and steady-state behavior with the initial distribution shown in Figure \ref{fig:safetyverification}.}%
		\label{fig:t100t1000}%
	\end{figure}
	
%


	\section{Conclusion}
	
	We have specialized a general maximal invariant set computation algorithm for Markov chains subject to polyhedral safety constraints on the probability distribution over states, and we have given conditions which ensure that the maximal invariant set is finitely determined and polyhedral. We then gave an SDP-based procedure for synthesizing a Markov chain which guarantees finite determination by proper choice of the steady-state distribution and by minimizing the second largest eigenvalue of the Markov matrix $M$, by promoting rapid convergence to a safe steady-state distribution. This method was illustrated with a swarm exploration example.

\bibliographystyle{unsrt}
\bibliography{InvariantSet}

\end{document}